\documentclass{amsart}
\usepackage{amsfonts}
\usepackage{bbm}
\usepackage{mathrsfs}
\usepackage{amsfonts}
\usepackage{amsmath}
\usepackage{bigdelim}
\usepackage{multirow}
\usepackage{amssymb}
\usepackage{indentfirst}
\usepackage[dvips]{graphicx}
\usepackage{CJK}
\usepackage{fancyhdr}
\usepackage{amscd,amssymb,amsmath,graphicx,verbatim}
\usepackage[TS1,OT1,T1]{fontenc}

\newtheorem{theorem}{Theorem}[section]
\newtheorem{lemma}[theorem]{Lemma}

\theoremstyle{definition}
\newtheorem{definition}[theorem]{Definition}

\newtheorem{question}[theorem]{Question}
\theoremstyle{remark}

\numberwithin{equation}{section}

\begin{document}
\title{Li-Yorke chaos for invertible mappings on compact metric spaces}
\author{Lvlin Luo}
\address{School of Mathematics and Statistics, Xidian University, 710071, Xi'an, P. R. China.}\email{luoll12@mails.jlu.edu.cn}
\author{Bingzhe Hou$^*$ }
\address{Bingzhe Hou, College of Mathematics , Jilin university, 130012, Changchun, P.R.China} \email{houbz@jlu.edu.cn}
\thanks{Corresponding author}
\subjclass[2000]{Primary 37B99, 54H20; Secondary 37C15.}
\keywords{Li-Yorke chaos, compact metric spaces, invertible dynamical systems.}
\begin{abstract}
In this paper, we construct a homeomorphism on the unit closed disk to show that an invertible mapping on a compact metric space is Li-Yorke chaotic does not imply its inverse being Li-Yorke chaotic.
\end{abstract}

\maketitle

\section{Introduction}

A dynamical system $(X, f)$ means a compact metric space $X$ and a continuous map $f:X\rightarrow X$. Moreover, if $f$ is invertible, we call $(X, f)$ an invertible dynamical system; if $X$ is noncompact, we call $(X, f)$ a noncompact dynamical system. Since the 1980s there is a growing literature that looks
at the connection between inverse limits and topological dynamics. In particular, inverse limits dynamical systems are useful in economic theory such as the overlapping generations
model and the cash-in-advance model \cite{M06,M07,KennedyJRStockmanDAYorkeJ,RStockmanD}. Then the dynamical properties of inverse limit dynamical systems attract more attentions. For instance, L. Liu and S. Zhao showed that inverse limit dynamical system is Martelli's chaos if and only if so is the original system\cite{LiuLZhaoS}; X. Wu, X. Wang and G. Chen \cite{WuXWangXChenG} investigated some
chaotic properties via Furstenberg families generated by inverse limit dynamical systems. Notice that if  $f$ is invertible, there is no difference between the induced inverse limit dynamical system and $(X, f^{-1})$. So
we are interested in the following question.
\begin{question}
Let $(X,f)$ be an invertible dynamical system. If $f$ has a dynamical property $\mathfrak{P}$, does its inverse $f^{-1}$ also have the property $\mathfrak{P}$?
\end{question}
It is not difficult to see that the answer is positive for many properties such as
transitivity, mixing, Martelli's chaos, Devaney chaos, positive entropy and so on. However, it has been unknown for Li-Yorke chaos introduced by Li and Yorke in 1975 \cite{LY}, this open question is raised by D. Stockman \cite{RStockmanD}.
For noncompact invertible dynamical systems, we had given  negative answers for distributional chaos \cite{LH} and Li-Yorke chaos \cite{HouLuo}. Inspired by an example given in \cite{HouLuo}, we will construct a homeomorphism on the unit closed disk to show that an invertible mapping on a compact metric space is Li-Yorke chaotic does not imply its inverse being Li-Yorke chaotic.

\begin{definition}
Let $(X,f)$ be a dynamical system. $\{x,y\}\subseteq X$ is said to be a Li-Yorke chaotic pair, if
\begin{equation*}
\limsup\limits_{n\rightarrow+\infty}d(f^{n}(x),f^{n}(y))>0 \ \ and \ \
\liminf\limits_{n\rightarrow+\infty}d(f^{n}(x),f^{n}(y))=0.
\end{equation*}
Furthermore, $f$ is called Li-Yorke chaotic, if there exists an
uncountable subset $\Gamma\subseteq X$ such that each pair of two
distinct points in $\Gamma$ is a Li-Yorke chaotic pair.
\end{definition}

\section{Main result}

Denote $\mathbb{R}$ by the set of all real numbers, and denote $\mathbb{Q}$ by the set of all rational numbers.
Then $\mathbb{R}$ is an infinite dimensional vector space over $\mathbb{Q}$. To prove the main result, we need the following lemmas from \cite{HouLuo}.

\begin{lemma}\label{1}
$\mathbb{R}/\mathbb{Q}$ is an uncountable infinite set.
\end{lemma}

\begin{lemma}\label{2}
For any open interval $(a,b)$, there exists an uncountable infinite subset $A$ of $(a,b)$ such that, for any distinct $x,y\in A$, $x-y$ is an irrational number. Furthermore, for any open interval $(c,d)\subseteq(0,1)$, there exists an uncountable infinite subset $B$ of $(c,d)$ such that, for any distinct $x,y\in B$,
$\ln\frac{x}{1-x}-\ln\frac{y}{1-y}$ is an irrational number.
\end{lemma}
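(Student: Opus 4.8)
The plan is to prove the first assertion by exploiting the coset structure of $\mathbb{R}$ over $\mathbb{Q}$, and then to obtain the second assertion by transporting the resulting set through the homeomorphism $\phi(x)=\ln\frac{x}{1-x}$.

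For the first assertion, I would introduce on $(a,b)$ the equivalence relation declaring $x\sim y$ precisely when $x-y\in\mathbb{Q}$; its equivalence classes are exactly the nonempty intersections with $(a,b)$ of the cosets of $\mathbb{Q}$ in $\mathbb{R}$. The key observation is that each such class is at most countable, since the class containing a point $x$ is contained in $\{x+q : q\in\mathbb{Q}\}$, which is countable. Because $(a,b)$ is uncountable while each class is countable, the collection of distinct classes must be uncountable; otherwise $(a,b)$ would be a countable union of countable sets and hence countable, contradicting the uncountability recorded in Lemma \ref{1}. Selecting, by the axiom of choice, one representative from each class then yields a set $A\subseteq(a,b)$ that is uncountable and whose distinct elements lie in distinct cosets of $\mathbb{Q}$; consequently $x-y\notin\mathbb{Q}$, i.e.\ $x-y$ is irrational, for any distinct $x,y\in A$.

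For the ``furthermore'' part I would note that $\phi(x)=\ln\frac{x}{1-x}$ is a strictly increasing continuous bijection, hence a homeomorphism, from $(0,1)$ onto $\mathbb{R}$, so it carries the subinterval $(c,d)\subseteq(0,1)$ homeomorphically onto the open interval $(\phi(c),\phi(d))\subseteq\mathbb{R}$. Applying the first assertion to this target interval produces an uncountable set $A'\subseteq(\phi(c),\phi(d))$ with all pairwise differences irrational; setting $B=\phi^{-1}(A')$ then gives an uncountable subset of $(c,d)$ for which $\ln\frac{x}{1-x}-\ln\frac{y}{1-y}=\phi(x)-\phi(y)$ is irrational whenever $x\neq y$.

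The main obstacle is the counting argument underlying the first assertion, namely establishing that uncountably many distinct cosets are represented inside $(a,b)$, which rests essentially on Lemma \ref{1}; once this is secured, the selection of coset representatives and the change of variables via $\phi$ are both routine.
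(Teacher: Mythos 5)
Your proof is correct, and since the paper itself states Lemma \ref{2} without proof (both Lemma \ref{1} and Lemma \ref{2} are imported from \cite{HouLuo}), there is no internal argument to compare against; your construction --- one representative per coset of $\mathbb{Q}$, then transport through the homeomorphism $\phi(x)=\ln\frac{x}{1-x}$ --- is exactly the standard route such a proof takes. Two small points of precision. First, the uncountability needed in your counting step is that of the interval $(a,b)$ itself, which is elementary (it has the cardinality of the continuum); it is not the statement of Lemma \ref{1}, which asserts uncountability of $\mathbb{R}/\mathbb{Q}$. In fact your argument, by exhibiting uncountably many distinct cosets meeting $(a,b)$, re-derives Lemma \ref{1} rather than relies on it, so the proof is self-contained once you replace that citation with the standard cardinality fact. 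Second, when $c=0$ or $d=1$ the image $\phi((c,d))$ is an unbounded interval, so the phrase ``the open interval $(\phi(c),\phi(d))$'' needs the convention that $a=-\infty$ or $b=+\infty$ is allowed in the first assertion; this costs nothing, since the coset argument applies verbatim to unbounded intervals (or indeed to any uncountable subset of $\mathbb{R}$), but it should be said. Neither point is a gap in substance.
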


Now let us consider the main conclusion of this article.

\begin{theorem}\label{fnchaos}
There exist a homeomorphism $f$ on the unit closed disk $\overline{\mathbb{D}}\triangleq\{z\in\mathbb{C};|z|\leq 1\}$ such that $f$ is Li-Yorke chaotic but $f^{-1}$ is not.
\end{theorem}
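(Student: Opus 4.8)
The plan is to work in polar coordinates $(r,\theta)$ on $\overline{\mathbb{D}}$ and to build $f$ so that the two time-directions are geometrically asymmetric: under $f$ every interior orbit should spiral outward and accumulate on the boundary circle $\{r=1\}$, while under $f^{-1}$ every interior orbit should spiral inward and converge to the single fixed point $0$. The inward collapse alone makes $f^{-1}$ non-chaotic. Indeed, if $f^{-n}(z)\to 0$ for every $z$ in the open disk, then for any two interior points $x,y$ one has $\limsup_{n}d(f^{-n}(x),f^{-n}(y))=0$, so no interior pair is a Li-Yorke pair; a short separate check disposes of the remaining pairs, since a pair with one interior and one boundary point has $\liminf_n d(f^{-n}(x),f^{-n}(y))>0$ (one coordinate tends to the center, the other stays on $\{r=1\}$), and choosing $f|_{\{r=1\}}$ to be a rigid rotation, an isometry, rules out Li-Yorke pairs of boundary points. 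Thus the whole difficulty is concentrated in making $f$ itself Li-Yorke chaotic.

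To encode the radial dynamics I would pass to the logit coordinate $u=\ln\frac{r}{1-r}$, an increasing homeomorphism $(0,1)\to\mathbb{R}$ sending the center to $-\infty$ and the boundary to $+\infty$, and let the radial part of $f$ act by the translation $u\mapsto u+1$ (so forward orbits exit to $\{r=1\}$ and backward orbits collapse to $0$, as required). The role of Lemma \ref{2} now becomes visible: take the uncountable set $B\subseteq(c,d)$ it furnishes, so that for distinct radii $r_x,r_y\in B$ the difference $u_x-u_y=\ln\frac{r_x}{1-r_x}-\ln\frac{r_y}{1-r_y}$ is irrational, and place the candidate scrambled set $\Gamma=\{(r,\theta_0):r\in B\}$ on a fixed radial segment. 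I would then couple the radial translation to a rotation whose cumulative effect makes the angular gap of two orbits from $\Gamma$ behave, after $n$ steps, like $2\pi n(u_x-u_y)\bmod 2\pi$. Since $u_x-u_y$ is irrational, Weyl equidistribution forces this gap to be equidistributed on the circle: it returns near $0$ infinitely often while both radii tend to $1$, giving $\liminf_n d(f^n x,f^n y)=0$, and it reaches values near $\pi$ infinitely often, giving $\limsup_n d(f^n x,f^n y)>0$. Every pair in the uncountable set $\Gamma$ would then be a Li-Yorke pair, so $f$ is Li-Yorke chaotic.

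The step I expect to be the main obstacle is precisely reconciling this winding mechanism with the requirement that $f$ extend to a \emph{homeomorphism} of the closed disk. Continuity at a boundary point $(1,\theta_0)$ forces the per-step rotation $\rho(r)$ to have a finite limit as $r\to 1$; but a rotation with a finite boundary limit produces, for orbits that necessarily compress against $\{r=1\}$, an angular gap whose successive increments are summable, so the gap converges and each such pair becomes either asymptotic or distal rather than Li-Yorke. Hence a naive skew product $(r,\theta)\mapsto(R(r),\theta+\rho(r))$ cannot by itself realize the equidistribution above, and the construction must be genuinely two-dimensional: one has to arrange the outward spiral so that the angular separation of distinct orbits keeps oscillating even though both orbits approach the boundary, all while keeping $f$ and $f^{-1}$ continuous and invertible at $r=1$ and at the center. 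Achieving this balance — continuity and invertibility on all of $\overline{\mathbb{D}}$ together with the non-convergent winding that drives the liminf/limsup dichotomy — is the technical heart of the argument, and it is where I would concentrate the work, using the explicit logit model of Lemma \ref{2} to keep quantitative control of the orbits in $\Gamma$.
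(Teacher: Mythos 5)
Your framework coincides with the paper's at every structural point: the same asymmetric radial dynamics (forward orbits pushed to the boundary circle, backward orbits collapsing to the center, which disposes of $f^{-1}$ exactly as you argue), the same logit coordinate $u=\ln\frac{r}{1-r}$ turning the radial motion into a translation of $\mathbb{R}$, the same scrambled set supplied by Lemma \ref{2} placed on a fixed radius, and the same target mechanism (irrational difference $u_x-u_y$ plus equidistribution giving $\liminf=0$ and $\limsup>0$). But your proposal stops exactly where the proof has to start: you never construct the winding mechanism, and --- worse --- you assert that it cannot be realized by a skew product $(r,\theta)\mapsto(R(r),\theta+\rho(r))$, on the grounds that continuity of $\rho$ at $r=1$ forces the orbit-to-orbit angular increments to be summable, hence the gap to converge. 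That implication is false, and the paper's map is precisely a counterexample: for $0<|z|<1$ the paper's $f$ \emph{is} the skew product with $R(r)=\frac{e^2r}{e^2r-r+1}$ and $\rho(r)=2\pi\,\xi\!\left(\frac{r}{1-r}\right)$, where $\xi(s)\to 0$ as $s\to\infty$, so $\rho$ extends continuously by $0$ to $r=1$ and $f$ is the identity on the boundary circle.

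The point you missed is that ``increments tend to zero'' does not imply ``increments are summable,'' and the construction arranges the failure deliberately. In the logit coordinate the two orbits remain a constant distance $u_x-u_y$ apart (the radial map is a translation, an isometry of $\mathbb{R}$); they approach each other only in the Euclidean metric, and $\rho$ is continuous but not Lipschitz at $r=1$, so your summability heuristic does not apply. Concretely, the paper's $\xi$ is piecewise linear in $\ln s$ with slope $\pm 2^{-j}$ on the $j$-th dyadic block of iterates, so within that block each of the $2^j$ increments equals $(u_x-u_y)/2^j$: individually they tend to $0$ (which is all that continuity at the boundary requires), yet every block contributes exactly $u_x-u_y$ to the cumulative gap, as in the harmonic series. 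Hence the angular gap at time $2^n-1$ is exactly $2\pi n(u_x-u_y)$, and Weyl equidistribution along this subsequence --- Li-Yorke chaos only needs subsequences, not the gap at every time $n$ as you demanded --- yields $\liminf_n|f^n(x)-f^n(y)|=0$ and $\limsup_n|f^n(x)-f^n(y)|=2$. So the ``technical heart'' you deferred is the entire content of the theorem, and the obstruction you raised against the natural route is not an obstruction at all.
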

\begin{proof}
Define a mapping $\xi:\mathbb{R}^{+}\rightarrow\mathbb{R}$ by,
$$
\xi(r)=\left\{
\begin{array}{l}
0,\qquad\qquad\, \hbox{if} \ 0\leq r\leq 1
\\[0.25 cm]
\frac{\ln r-[\ln r]}{2^{[\log_{2}([\frac{\ln r}{2}]+1)]}},\qquad\qquad\, \hbox{if} \ r\geq 1 \  \hbox{and} \ [\ln r] \ \hbox{is even}
\\[0.25 cm]
\frac{1-(\ln r-[\ln r])}{2^{[\log_{2}([\frac{\ln r}{2}]+1)]}},\qquad\qquad\, \hbox{if} \ r\geq 1 \  \hbox{and} \ [\ln r] \ \hbox{is odd}
\end{array}
\right.
$$
where $[\cdot]$ means the integer part of some certain real number. Notice that if $\ln r_0=2m$,
$$
\lim\limits_{r\rightarrow r_0^-} \xi(r)=\lim\limits_{r\rightarrow r_0^-} \frac{1-(\ln r-[\ln r])}{2^{[\log_{2}m]}}=0=\xi(r_0),
$$
$$
\lim\limits_{r\rightarrow r_0^+} \xi(r)=\lim\limits_{r\rightarrow r_0^+} \frac{\ln r-[\ln r]}{2^{[\log_{2}(m+1)]}}=0=\xi(r_0);
$$
if $\ln r_0=2m+1$,
$$
\lim\limits_{r\rightarrow r_0^-} \xi(r)=\lim\limits_{r\rightarrow r_0^-} \frac{\ln r-[\ln r]}{2^{[\log_{2}m]}}=\frac{1}{2^{[\log_{2}m]}}=\xi(r_0),
$$
$$
\lim\limits_{r\rightarrow r_0^+} \xi(r)=\lim\limits_{r\rightarrow r_0^+} \frac{1-(\ln r-[\ln r])}{2^{[\log_{2}m]}}=\frac{1}{2^{[\log_{2}m]}}=\xi(r_0)
$$
Then $\xi$ is continuous.

Define a mapping $g:\mathbb{C}\rightarrow\mathbb{C}$ by
$$
g(0)=0 \ \ and \ \ g(w)={e^2w}e^{2\pi i \xi(|w|)}, \ \ \ for \ all \ 0\neq w\in \mathbb{C}.
$$
Then $g$ is a homeomorphism on $\mathbb{C}$ and its inverse is
$$
g^{-1}(0)=0 \ \ and \ \ g^{-1}(w)={e^{-2}w}e^{-2\pi i \xi(e^{-2}|w|)}, \ \ \ for \ all \ 0\neq w\in \mathbb{C}.
$$

Define a mapping $h:\mathbb{D}\rightarrow\mathbb{C}$ by
$$
h(z)=\frac{z}{1-|z|}, \ \ \ for \ all \ z\in \mathbb{D}.
$$
Then $h$ is a homeomorphism and its inverse is
$$
h^{-1}(w)=\frac{w}{1+|w|}, \ \ \ for \ all \ w\in \mathbb{C}.
$$

Furthermore, define a mapping $f:\overline{\mathbb{D}}\rightarrow\overline{\mathbb{D}}$ by
$$
f(z)=\left\{
\begin{array}{l}
\frac{e^2 z}{e^2|z|-|z|+1}e^{2\pi i \xi(\frac{|z|}{1-|z|})}, \qquad \hbox{if} \  0<|z|<1,
\\[0.25 cm]
z,\qquad \hbox{if} \ |z|=0 \ \text{or} \ 1.
\end{array}
\right.
$$
Moreover, for any $|z|<1$, $f(z)=h^{-1}\circ g\circ h(z)$.

Notice that for any $ w\in \mathbb{C}$,
$$
\lim\limits_{n\rightarrow+\infty}|g^{-n}(w)|=0.
$$
Then one can see
$$
\lim\limits_{n\rightarrow+\infty}f^{-n}(z)=\left\{
\begin{array}{l}
0, \qquad \hbox{if}\quad |z|<1,
\\[0.25 cm]
z,\qquad \hbox{if}\quad |z|=1.
\end{array}
\right.
$$
Thus, $f^{-1}$ is not Li-Yorke chaotic.

Now it suffices to show $f$ is Li-Yorke chaotic. Choose an uncountable infinite subset $B$ of the open interval $(\frac{1}{2},\frac{e}{1+e})\subseteq (0,1)$ satisfying the conditions in Lemma \ref{2}.
Given any two distinct points $x,y\in B$, then $\ln\frac{x}{1-x}-\ln\frac{y}{1-y}$ is an irrational number. Consequently, there exist two sequences of strictly increasing positive integers
$\{m_k\}$ and $\{n_k\}$ such that
$$
\lim\limits_{k\rightarrow+\infty}e^{2\pi i m_k(\ln\frac{x}{1-x}-\ln\frac{y}{1-y})}=1 \ \ and \ \
\lim\limits_{k\rightarrow+\infty}e^{2\pi i n_k(\ln\frac{x}{1-x}-\ln\frac{y}{1-y})}=-1.
$$
Notice that given $r\in (1,e)$, for $2^j\leq k \leq 2^{j+1}-1$,
$$
[\log_{2}([\frac{\ln (e^{2k}r)}{2}]+1)]=j.
$$
Then for any $n\in \mathbb{N}$ and $w\in (1,e)$,
\begin{eqnarray*}
g^{2^n-1}(w)&=&e^{2(2^n-1)}we^{2\pi i \sum\limits_{k=0}^{2^n-1}\xi(|e^{2k}w|)} \\
&=&e^{2(2^n-1)}we^{2\pi i \sum\limits_{j=0}^{n-1}\sum\limits_{k=2^j}^{2^{j+1}-1}\xi(|e^{2k}w|)} \\
&=&e^{2(2^n-1)}we^{2\pi i \sum\limits_{j=0}^{n-1}2^j\frac{\ln{w}}{2^j}}\\
&=&e^{2(2^n-1)}we^{2\pi i n\ln w}.
\end{eqnarray*}
Furthermore, for any $z\in(\frac{1}{2},\frac{e}{1+e})$,
$$
f^{2^n-1}(z)=\frac{e^{2(2^n-1)}z}{e^{2(2^n-1)}|z|-|z|+1}e^{2\pi i n\ln \frac{|z|}{1-|z|}}.
$$

Together with
$$
\lim\limits_{n\rightarrow+\infty}\frac{e^{2n}|z|}{e^{2n}|z|-|z|+1}=1 \  \ for \ all \ 0\neq z\in \mathbb{D},
$$
we have
\begin{eqnarray*}
&& \liminf\limits_{n\rightarrow +\infty}|f^{n}(x)-f^{n}(y)|  \\
&\leq& \lim\limits_{k\rightarrow +\infty}|f^{2^{m_k}-1}(x)-f^{2^{m_k}-1}(y)|  \\
&=& \lim\limits_{k\rightarrow+\infty}|\frac{e^{2(2^{m_k}-1)}x}{e^{2(2^{m_k}-1)}x-x+1}e^{2\pi i {m_k}\ln\frac{x}{1-x}}-\frac{e^{2(2^{m_k}-1)}y}{e^{2(2^{m_k}-1)}y-y+1}e^{2\pi i {m_k}\ln\frac{y}{1-y}}|  \\
&\leq& \lim\limits_{k\rightarrow+\infty}|\frac{e^{2(2^{m_k}-1)}x}{e^{2(2^{m_k}-1)}x-x+1}e^{2\pi i {m_k}\ln\frac{x}{1-x}}-\frac{e^{2(2^{m_k}-1)}y}{e^{2(2^{m_k}-1)}y-y+1}e^{2\pi i {m_k}\ln\frac{x}{1-x}}|+  \\
&& \lim\limits_{k\rightarrow +\infty}|\frac{e^{2(2^{m_k}-1)}y}{e^{2(2^{m_k}-1)}y-y+1}e^{2\pi i {m_k}\ln\frac{x}{1-x}}-\frac{e^{2(2^{m_k}-1)}y}{e^{2(2^{m_k}-1)}y-y+1}e^{2\pi i {m_k}\ln\frac{y}{1-y}}|  \\
&=& \lim\limits_{k\rightarrow+\infty}|\frac{e^{2(2^{m_k}-1)}x}{e^{2(2^{m_k}-1)}x-x+1}-\frac{e^{2(2^{m_k}-1)}y}{e^{2(2^{m_k}-1)}y-y+1}||e^{2\pi i {m_k}\ln\frac{x}{1-x}}|+   \\
&&\lim\limits_{k\rightarrow +\infty}|\frac{e^{2(2^{m_k}-1)}y}{e^{2(2^{m_k}-1)}y-y+1}||e^{2\pi i {m_k}\ln\frac{y}{1-y}}||e^{2\pi i {m_k}(\ln\frac{x}{1-x}-\ln\frac{y}{1-y})}-1|   \\
&=& 0.
\end{eqnarray*}
and
\begin{eqnarray*}
&& \limsup\limits_{n\rightarrow +\infty}|f^{n}(x)-f^{n}(y)|  \\
&\geq& \lim\limits_{k\rightarrow +\infty}|f^{2^{n_k}-1}(x)-f^{2^{n_k}-1}(y)|  \\
&=& \lim\limits_{k\rightarrow+\infty}|\frac{e^{2(2^{n_k}-1)}x}{e^{2(2^{n_k}-1)}x-x+1}e^{2\pi i {n_k}\ln\frac{x}{1-x}}-\frac{e^{2(2^{n_k}-1)}y}{e^{2(2^{n_k}-1)}y-y+1}e^{2\pi i {n_k}\ln\frac{y}{1-y}}|  \\
&\geq& \lim\limits_{k\rightarrow+\infty}|\frac{e^{2(2^{n_k}-1)}y}{e^{2(2^{n_k}-1)}y-y+1}e^{2\pi i {n_k}\ln\frac{x}{1-x}}-\frac{e^{2(2^{n_k}-1)}y}{e^{2(2^{n_k}-1)}y-y+1}e^{2\pi i {n_k}\ln\frac{y}{1-y}}|-  \\
&& \lim\limits_{k\rightarrow +\infty}|\frac{e^{2(2^{n_k}-1)}x}{e^{2(2^{n_k}-1)}x-x+1}e^{2\pi i {n_k}\ln\frac{x}{1-x}}-\frac{e^{2(2^{n_k}-1)}y}{e^{2(2^{n_k}-1)}y-y+1}e^{2\pi i {n_k}\ln\frac{x}{1-x}}|  \\
&=& \lim\limits_{k\rightarrow +\infty}|\frac{e^{2(2^{n_k}-1)}y}{e^{2(2^{n_k}-1)}y-y+1}||e^{2\pi i {n_k}\ln\frac{y}{1-y}}||e^{2\pi i {n_k}(\ln\frac{x}{1-x}-\ln\frac{y}{1-y})}-1|-   \\
&&\lim\limits_{k\rightarrow+\infty}|\frac{e^{2(2^{n_k}-1)}x}{e^{2(2^{n_k}-1)}x-x+1}-\frac{e^{2(2^{n_k}-1)}y}{e^{2(2^{n_k}-1)}y-y+1}||e^{2\pi i {n_k}\ln\frac{x}{1-x}}|  \\
&=& 2
\end{eqnarray*}
Thus, $\{x,y\}$ is a Li-Yorke chaotic pair and hence $f$ is Li-Yorke chaotic.
\end{proof}


\begin{thebibliography}{000}

\bibitem{HouLuo} B. Hou, L. Luo, Li-Yorke chaos for invertible mappings on noncompact spaces,
Turk. J. Math., 40 (2016), 411-416.

\bibitem{KennedyJRStockmanDAYorkeJ} J. Kennedy, D. Stockman, J. Yorke,
Inverse limits and an implicitly defined difference equation
from economics, Topology and its Applications, 154 (2007), 2533-2552.

\bibitem{LY} T. Li, J. Yorke, Period three implies chaos, Amer.
Math. Monthly, 82 (10) (1975), 985-992.

\bibitem{LiuLZhaoS} L. Liu, S. Zhao, Martelli's Chaos in Inverse Limit Dynamical
Systems and Hyperspace Dynamical Systems, Results. Math., 63 (2013), 195-207.

\bibitem{LH} L. Luo, B. Hou, Some remarks on distributional chaos for bounded linear operators, Turk. J. Math., 39 (2) (2015), 251-258.


\bibitem{M06}
A. Medio, B. Raines, Inverse limit spaces arising from problems in economics, Topology and its Applications, 153 (2006), 3439-3449.

\bibitem{M07}
A. Medio, B. Raines, Backward dynamics in economics: the inverse limit approach, Journal of Economic Dynamics and Control, 31 (2007),
1633-1671.

\bibitem{RStockmanD} D. Stockman, Li-Yorke Chaos in Models with Backward Dynamics, (2012), available at \\ http://sites.udel.edu/econseminar/files/2012/03/lyc-backward.pdf.


\bibitem{WuXWangXChenG} X. Wu, X. Wang, G. Chen, $\mathcal{F}$-mixing property and $(\mathcal{F}1,\mathcal{F}2)$-everywhere chaos of inverse limit dynamical systems, Nonlinear Analysis, 104 (2014), 147-155.

\end{thebibliography}
\end{document}